\DeclareMathOperator{\Ker}{Ker}
\DeclareMathOperator{\Ima}{Im}
\DeclareMathOperator{\Ck}{Coker}
\DeclareMathOperator{\Homom}{Hom}
\DeclareMathOperator{\Ext}{Ext}
\theoremstyle{plain}
 \newtheorem{theo}{Theorem}[section]
 \newtheorem{lemm}[theo]{Lemma}
\theoremstyle{definition}
\theoremstyle{remark}
\newcommand{\cal}{\mathcal}
\newcommand{\one}{\hspace{1mm}}
\newcommand{\om}{\omega}
\title{A Twisted First Homology group of the Goeritz group of $S^3$}
\author{Akira Kanada}
\begin{document}
\maketitle
\begin{abstract}
Given a genus-$g$ Heegaard splitting of a $3$-sphere,
the genus-$g$ Goeritz group $\cal{E}_g$ is defined to be the group of the isotopy classes of orientation preserving homeomorphism of the $3$-sphere that preserve the splitting. 
In this paper, we determine the twisted first (co)homology group of the genus-$2$ Goeritz group of $3$-sphere.
\end{abstract}

\section{Introduction}
\subsection*{Mapping class group}
Let $H_g$ be a 3-dimensional handlebody of genus $g$, and $\Sigma_g$ be the boundary surface $\partial H_g$.
We denote by $\cal{M}_g$ the mapping class group of $\Sigma_g$, the group of isotopy classes of orientation preserving homeomorphisms of $\Sigma_g$. Dehn \cite{Dehn} proved that $\cal{M}_g$ is generated by finitely many Dehn twists. Furthermore Lickorish \cite{Lic1,Lic2} proved that $3g-1$ Dehn twists generate $\cal{M}_g$.  Humphries \cite{Hum} found that $2g+1$ Dehn twists generate $\cal{M}_g$.\\
\one We denote by $\cal{H}_g$ the handlebody mapping class group, the subgroup of  mapping class group $\cal{M}_g$ of boundary surface $\partial H_g$ defined by isotopy classes of those orientation preserving homeomorphisms of  $\partial H_g$ which can be extended to homeomorphisms of $H_g$.  It turns out that $\cal{H}_g$ can be identified with the group of isotopy classes of orientation preserving homeomorphisms of $H_g$. A finite presentation of the handlebody mapping class group $\cal{H}_g$ was obtained by Wajnryb \cite{Waj}.
\subsection*{Goeritz group}
Let $H_g$ and $H_g^{*}$ be 3-dimensional handlebodies, and $M=H_g \cup H_g^{*}$ be a Heegaard splitting of a closed orientable $3$-manifold $M$. Let $\cal{M}_g$ be the mapping class group of the boundary surface $\partial H_g = \Sigma_g$. The group of  mapping classes $[f ] \in \cal{M}_g$
 such that there is an orientation preserving self-homeomorphism $F$ of $(M,H_g)$ satisfying $[F|_{\partial H_g}]=[f]$ is called the genus-$g$ Goeritz group of $M=H_g \cup H_g^{*}$. 
 When a manifold $M$ admits a unique Heegaard splitting of genus $g$ up to isotopy, we can define the genus-$g$ Goeritz group of  the manifold without mentioning a specific splitting. For example, the $3$-sphere, $S^1\times S^2$ and lens spaces are known to be such manifolds from \cite{W}, \cite{Bon1} and \cite{Bon2}.

In studying Goeritz groups, finding their generating sets or presentations has been an interesting problem. However the generating sets or the presentation of those groups have been obtained only for a few manifolds with their splittings of small genera. A finite presentation of the genus-2 Goeritz group of 3-sphere was obtained \cite{ER}.
In an arbitrary genus, first Powell \cite{Pow} and then Hirose \cite{Hi} claimed that they have found a finite generating set for the genus-$g$ Goeritz group of 3-sphere, though serious gaps in both arguments were found by Scharlemann. Establishing the existence of such generating sets appears to be an open problem.

In addition, finite presentations of the genus-2 Goeritz groups of each lens spaces $L(p,1)$ were obtained \cite{Cho1}, other lens spaces were obtained \cite{Cho2} and the genus-$2$ Heegaard splittings of non-prime $3$-manifolds were obtained \cite{Cho&Ko2}. Recently a finite presentation of  the genus-2 Goeritz group of $S^1\times S^2$ was obtained \cite{Cho&Ko}.  
\subsection*{Homology of mapping class group}
Computing homology of mapping class groups is interesting topic of studying mapping class groups. Harer \cite{Har} determined the second homology group of mapping class group $\cal{M}_g$:

\begin{align*} 
H_2(\cal{M}_g ; \mathbb{Z})\cong \mathbb{Z} \ \  \text{if}\ g\geq4.
\end{align*} 

In fact, Harer proved a more general theorem for surfaces with multiple boundary components and arbitrarily many punctures.

In twisted case, Morita \cite{MOR} determined the first homology group with coefficients in the first integral homology group of the surface:

\begin{align*} 
H_1(\cal{M}_g ; H_1(\Sigma_g))\cong \mathbb{Z}/(2g-2)\mathbb{Z} \ \  \text{if}\ g\geq2.
\end{align*} 

Recently Ishida and Sato \cite{To&Ishi} computed the twisted first homology groups of the handlebody mapping class group $\cal{H}_g$ with coefficients in the first integral homology group of the boundary surface $\Sigma_g$:

\begin{align*} 
H_1(\cal{H}_g ; H_1(\Sigma_g))\cong
\begin{cases}
\mathbb{Z}/(2g-2)\mathbb{Z} & \text{if}\ g\geq4,\\
\mathbb{Z}/2\mathbb{Z} \oplus \mathbb{Z}/4\mathbb{Z} & \text{if}\ g=3,\\
(\mathbb{Z}/2\mathbb{Z})^2 & \text{if}\ g=2.
\end{cases}
\end{align*}

\subsection*{Goeritz group of $S^3$}
 Let $H_g$ and $H_g^{*}$ be 3-dimensional handlebodies, and $S^3=H_g \cup H_g^{*}$ be the Heegaard splitting of the $3$-sphere $S^3$. Waldhausen \cite{W} proved that a genus-$g$ Heegaard splitting of $S^3$ is unique up to isotopy. Let $\cal{M}_g$ be the mapping class group of the boundary surface $\partial H_g = \Sigma_g$. The group of  mapping classes $[f ] \in \cal{M}_g$
 such that there is an orientation preserving self-homeomorphism $F$ of $(S^3,H_g)$ satisfying $[F|_{\partial H_g}]=[f]$ is denoted by $\cal{E}_g$. It is called the genus-$g$ Goeritz group of the $3$-sphere.
 
\subsection*{Twisted homology group of $\cal{E}_2$}
In this paper, we compute the twisted first homology group of $\cal{E}_2$ with coefficients in the first integral homology group of the Heegaard surface $\Sigma_2$. The following is the main theorem in this paper.
\begin{theo}
\begin{align*} 
H_1(\cal{E}_2; H_1(\Sigma_2) )\cong  (\mathbb{Z}/2\mathbb{Z})^2 .
\end{align*} 
\end{theo}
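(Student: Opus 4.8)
The plan is to compute $H_1(\cal{E}_2; H_1(\Sigma_2))$ directly from a finite presentation of $\cal{E}_2$, following the strategy used by Morita \cite{MOR} and by Ishida and Sato \cite{To&Ishi} for the related groups $\cal{M}_g$ and $\cal{H}_g$. First I would fix the finite presentation $\cal{E}_2 = \langle x_1, \dots, x_n \mid r_1, \dots, r_m \rangle$ of \cite{ER} and make the coefficient action completely explicit: each generator $x_i$ determines a matrix $\rho(x_i) \in \mathrm{Aut}(H_1(\Sigma_2)) \cong \mathrm{GL}(4,\mathbb{Z})$ recording its action on a fixed symplectic basis $a_1,b_1,a_2,b_2$. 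Since every mapping class in $\cal{E}_2$ extends over \emph{both} handlebodies $H_2$ and $H_2^{*}$, each $\rho(x_i)$ preserves the two complementary Lagrangians $L = \Ker(H_1(\Sigma_2) \to H_1(H_2))$ and $L^{*} = \Ker(H_1(\Sigma_2) \to H_1(H_2^{*}))$, acting block-diagonally; this pins down the matrices and shows that $\rho$ factors through a finite quotient of $\cal{E}_2$, the infinite-order ``eyeglass'' generators acting trivially on homology.

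A key simplification I would exploit is that $L$ and $L^{*}$ are spanned by the meridians of $H_2$ and of $H_2^{*}$ respectively and together form an integral basis of $H_1(\Sigma_2)$. Hence, as a $\mathbb{Z}[\cal{E}_2]$-module,
\begin{align*}
H_1(\Sigma_2) \cong L \oplus L^{*},
\end{align*}
a direct sum of two rank-$2$ modules on which $\cal{E}_2$ acts through a finite quotient, with $L^{*}$ the contragredient of $L$. Additivity of homology in the coefficients then gives $H_1(\cal{E}_2; H_1(\Sigma_2)) \cong H_1(\cal{E}_2; L) \oplus H_1(\cal{E}_2; L^{*})$, so it suffices to carry out two rank-$2$ computations, which I expect to contribute one $\mathbb{Z}/2\mathbb{Z}$ summand each.

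For each summand I would use the standard partial free resolution of $\mathbb{Z}$ coming from the presentation $2$-complex (as in \cite{To&Ishi}), namely the complex
\begin{align*}
\mathbb{Z}[\cal{E}_2]^{m} \xrightarrow{\ \partial_2\ } \mathbb{Z}[\cal{E}_2]^{n} \xrightarrow{\ \partial_1\ } \mathbb{Z}[\cal{E}_2],
\end{align*}
where $\partial_1$ is determined by $x_i \mapsto x_i - 1$ and $\partial_2$ is the Jacobian of Fox derivatives $(\partial r_j / \partial x_i)$. Tensoring this resolution over $\mathbb{Z}[\cal{E}_2]$ with a coefficient module $N \in \{L, L^{*}\}$ yields a complex of free abelian groups $N^{m} \to N^{n} \to N$, and $H_1(\cal{E}_2; N) = \Ker(\partial_1 \otimes 1)/\Ima(\partial_2 \otimes 1)$. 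Concretely this is a matrix of integers obtained by evaluating the Fox Jacobian under $\rho$, and the homology is read off from its Smith normal form.

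The hard part will be twofold. The first, more conceptual, obstacle is to determine the matrices $\rho(x_i)$ rigorously from the geometric description of the Goeritz generators, that is, to identify exactly how each generator permutes and reflects the two handles; a single sign error here would propagate through the whole computation. The second is the bookkeeping of the resulting integer linear algebra: assembling the Fox Jacobian, identifying the cycle subgroup $\Ker(\partial_1 \otimes 1)$, and computing the Smith normal form carefully enough to certify that the torsion is exactly $(\mathbb{Z}/2\mathbb{Z})^2$ and collapses no further. As a consistency check I would compare with the inclusion $\cal{E}_2 \subseteq \cal{H}_2$ and the value $H_1(\cal{H}_2; H_1(\Sigma_2)) \cong (\mathbb{Z}/2\mathbb{Z})^2$ obtained in \cite{To&Ishi}, and with the dual cohomological computation via crossed homomorphisms, which should agree by universal coefficients.
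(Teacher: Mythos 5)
Your proposal is correct in outline and rests on the same foundation as the paper's proof: fix the Akbas presentation of $\cal{E}_2$, write down the action of the four generators on $H_1(\Sigma_2)$, and reduce everything to integer linear algebra governed by the relators. The differences are worth recording. The paper works dually: it computes $H^1(\cal{E}_2;H_A)$ for an arbitrary coefficient ring $A$ by solving the linear system that the relations (P1)--(P3) impose on a crossed homomorphism $d$ (essentially the transpose of your Fox--Jacobian system), normalizes by the principal crossed homomorphisms via an explicit splitting (Lemmas 2.2 and 2.3), and only then recovers $H_1$ through the universal coefficient sequence, which requires the separate Lemma 2.1 that the coinvariants $H_1(\Sigma_2)_{\cal{E}_2}$ vanish so that the $\Ext$ term drops out. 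Your route --- tensoring the presentation $2$-complex with the coefficients and taking a Smith normal form --- computes $H_1$ directly over $\mathbb{Z}$ and avoids both the choice of splitting of $\Ima\pi$ and the universal-coefficient step. Your decomposition $H_1(\Sigma_2)\cong L\oplus L^{*}$ into the two complementary Lagrangians is a genuine structural addition not made explicit in the paper, and it is validated by the paper's matrices: all four generators act block-diagonally with respect to $\langle x_1,x_2\rangle\oplus\langle y_1,y_2\rangle$, and in the final answer the two $\mathbb{Z}/2\mathbb{Z}$ summands are indeed carried separately by $\om_{22}$ in the $x$-block and $\om_{23}$ in the $y$-block; this halves the linear algebra and explains why the answer splits as $\mathbb{Z}/2\mathbb{Z}\oplus\mathbb{Z}/2\mathbb{Z}$. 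One caveat: your aside that the infinite-order ``eyeglass'' generator acts trivially on homology is false for this presentation --- $\gamma$ acts by $x_1\mapsto -x_2$, $x_2\mapsto -x_1$, $y_1\mapsto -y_2$, $y_2\mapsto -y_1$ --- but since you correctly flag the rigorous determination of these matrices as the main point requiring care, this would be caught in execution rather than derailing the argument.
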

A finite presentation of the genus-2 Goeritz group of the 3-sphere was obtained from the works of \cite{ER}.
 For the higher genus Goeritz groups of the 3-sphere, it is conjectured that all of them are finitely presented however it is still known to be an open problem.\\

\newpage

Let $\Sigma_g$ be a compact connected orientable surface of genus $g\geq1$ and
$\alpha_1,\dots,\alpha_g,\beta_1,\dots,\beta_g$ be oriented simple closed curves as in Figure 1. 
We denote their homology classes in $H_1(\Sigma_g)$ by $x_1=[\alpha_1], x_2=[\alpha_2], \dots, x_g=[\alpha_g], y_1=[\beta_1], y_2=[\beta_2], \dots,y_g=[\beta_g]$. The basis $\{x_1,...,x_g,y_1,...,y_g\}$ of $H_A$ induces an isomorphism $H_A \simeq A^{2g}$. For $v\in A^{2g}$, we denote its projection to the $i$-th coordinate of $A^{2g}$ by $v_{i}$ for $i = 1,2,...,2g$.\\

\begin{center}
\begin{overpic}[width=11cm]{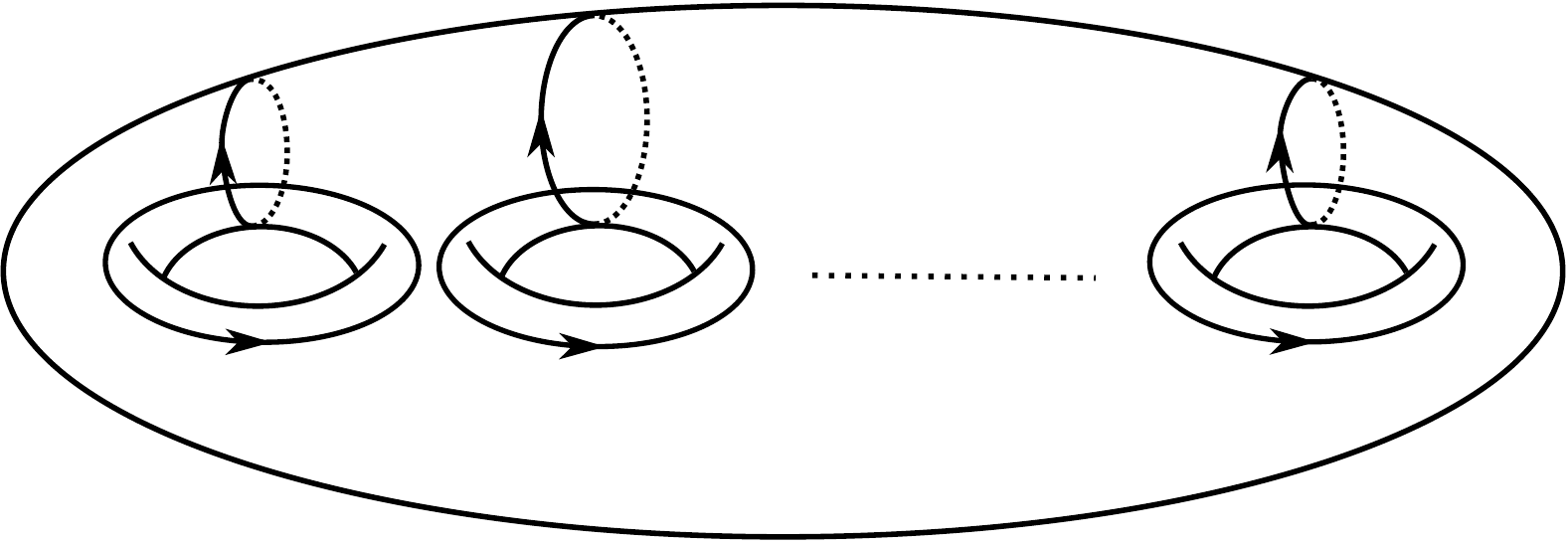}
\put(15,32){$\alpha_1$}
\put(37,36){$\alpha_2$}
\put(83,32){$\alpha_g$}
\put(15,17){$1$}
\put(37,17){$2$}
\put(83,17){$g$}
\put(15,8){$\beta_1$}
\put(37,8){$\beta_2$}
\put(83,8){$\beta_g$}
\put(0,-5){Figure 1 : Surface $\Sigma_g$ and simple closed curves $\alpha_1,\dots,\alpha_g,\beta_1,\dots,\beta_g$.}
\end{overpic}
\end{center}
\vspace{1cm}
Akbas gave following presentation for $\cal{E}_2$ in \cite{ER}.
\begin{theo}[\cite{ER}]
The group $\cal{E}_2$ has four generators $[\alpha],[\beta],[\gamma]$ and $[\delta]$, and the following  relations:
\begin{itemize}
\item[(P1)] $[\alpha]^2=[\beta]^2=[\delta]^3=[\alpha \gamma]^2=0$.
\item[(P2)] $[\alpha \delta \alpha]=[\delta]$ and $[\alpha \beta \alpha]=[\beta]$.
\item[(P3)] $[\gamma \beta \gamma]=[\alpha \beta]$ and $[\delta]=[\gamma \delta^2 \gamma]$.
\end{itemize}
\end{theo}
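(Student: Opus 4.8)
The statement is a theorem of Akbas \cite{ER}, so the plan is to reconstruct a proof rather than simply to cite it. The strategy is to read the presentation off an action of $\mathcal{E}_2$ on a contractible complex built from the splitting, and the argument splits into an ``easy'' half (the listed relations are valid) and a ``hard'' half (they are complete). First I would fix the standard genus-$2$ Heegaard splitting $S^3 = H_2 \cup H_2^{*}$ and realize $[\alpha],[\beta],[\gamma],[\delta]$ as explicit isotopy classes of orientation-preserving homeomorphisms of $(S^3,H_2)$, namely the standard symmetries of the symmetric genus-$2$ diagram: two half-twist type involutions, the involution exchanging the two handles, and an order-$3$ rotation. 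With these pictures in hand, each relation in (P1)--(P3) is checked by drawing the images of the curves $\alpha_1,\alpha_2,\beta_1,\beta_2$ of Figure 1 and verifying that the two sides agree up to isotopy; in particular $[\delta]^3=1$, the commutations $[\alpha\delta\alpha]=[\delta]$ and $[\alpha\beta\alpha]=[\beta]$, and the conjugations in (P3) are read directly from the symmetry. This shows the relations hold in $\mathcal{E}_2$, so the abstractly presented group $G=\langle\alpha,\beta,\gamma,\delta\mid(\mathrm{P1}),(\mathrm{P2}),(\mathrm{P3})\rangle$ admits a well-defined homomorphism $G\to\mathcal{E}_2$.

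The hard half is to show this map is an isomorphism, for which I would let $\mathcal{E}_2$ act on the complex $\mathcal{R}$ whose vertices are isotopy classes of reducing spheres (equivalently, of essential/primitive disk systems) for the splitting, with a simplex spanned by vertices whose spheres can be isotoped to be mutually disjoint. The crucial topological input, special to genus $2$, is that $\mathcal{R}$ is contractible (simple connectivity already suffices). By Waldhausen's uniqueness of the genus-$2$ splitting \cite{W} every element of $\mathcal{E}_2$ permutes reducing spheres, so the action is simplicial, and one arranges it to be without inversions. Granting simple connectivity, the standard machinery recovering a presentation from an action on a simply connected complex (Bass--Serre theory when $\mathcal{R}$ reduces to a tree, or its $2$-complex refinement) expresses $\mathcal{E}_2$ via generators consisting of generating sets of the vertex stabilizers together with one letter per edge orbit, and relations consisting of the stabilizer relations plus the edge-gluing relations.

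The remaining step is to compute a fundamental domain for $\mathcal{R}/\mathcal{E}_2$ and the vertex and edge stabilizers, by classifying which symmetries preserve each configuration of reducing spheres. I expect the quotient to be a short segment with a small number of vertex orbits whose stabilizers are finite; one stabilizer should contain the commuting pair $\langle\alpha,\delta\rangle\cong\mathbb{Z}/2\times\mathbb{Z}/3$ (yielding $[\alpha]^2$, $[\delta]^3$ and $[\alpha\delta\alpha]=[\delta]$), another the pair $\langle\alpha,\beta\rangle\cong(\mathbb{Z}/2)^2$ (yielding $[\beta]^2$ and $[\alpha\beta\alpha]=[\beta]$), while $[\gamma]$ arises as the letter attached to the amalgamating/flip edge, producing $(\alpha\gamma)^2=1$ together with the conjugation relations $[\gamma\beta\gamma]=[\alpha\beta]$ and $[\delta]=[\gamma\delta^2\gamma]$ of (P3). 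Matching the Bass--Serre presentation against this data shows that $\alpha,\beta,\gamma,\delta$ generate $\mathcal{E}_2$ (surjectivity) and that (P1)--(P3) generate all relations (injectivity), hence $G\cong\mathcal{E}_2$.

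The main obstacle is the contractibility, or at least simple connectivity, of $\mathcal{R}$: this is where the genus-$2$ topology genuinely enters, through a disk-and-sphere surgery argument controlling how two reducing spheres intersect, and it is the only step that is not routine bookkeeping. A secondary difficulty is the careful identification of the vertex stabilizers and the verification that the edge-gluing relations are \emph{exactly} (P3), rather than some equivalent set presented differently; resolving this requires tracking the chosen generators through the combinatorics of the fundamental domain.
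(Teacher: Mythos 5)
This paper contains no proof of the statement: Theorem 1.2 is quoted verbatim from Akbas \cite{ER}, with the generating set going back to Scharlemann \cite{Sch}, so your proposal can only be measured against those sources. At the level of architecture it does match them: Scharlemann introduced the complex of reducing spheres of the genus-two splitting and proved the connectivity needed to extract the generators $\alpha,\beta,\gamma,\delta$, and Akbas obtained the presentation by reducing to a tree on which $\mathcal{E}_2$ acts and applying exactly the stabilizer-plus-edge-identification machinery you describe (Cho later gave a variant proof via the contractible primitive disk complex). You also correctly locate the hard point: the simple connectivity/contractibility of the relevant complex, established in those papers by a surgery analysis of how two reducing spheres intersect, is the real content, and your proposal leaves it as an acknowledged black box. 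So what you have is a correct program, not a proof.

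Beyond that acknowledged gap, your guesses about the fundamental domain conflict with data recorded in this very paper. From the homology actions listed in Section 2, $\alpha$, $\beta$ and $\gamma$ all preserve the standard reducing sphere $P$ (with $\gamma$ exchanging its two sides), whereas $\delta_*(x_1)=-x_1+x_2$ lies in neither summand of the splitting $\langle x_1,y_1\rangle\oplus\langle x_2,y_2\rangle$ determined by $P$; since $\delta$ has order $3$ it also cannot swap the summands, so $\delta$ does not fix $[P]$. Consequently $\alpha^2$, $\beta^2$, $(\alpha\gamma)^2$, $\alpha\beta\alpha=\beta$, and the first relation of (P3), $\gamma\beta\gamma=\alpha\beta$, are relations internal to the stabilizer of the vertex $[P]$, and it is the $\delta$-relations that encode the gluing data of the tree; your assignment --- $\langle\alpha,\delta\rangle\cong\mathbb{Z}/2\times\mathbb{Z}/3$ inside a vertex group and $\gamma$ as the edge letter carrying (P3) --- is inverted relative to Akbas's actual computation. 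Your expectation that the vertex stabilizers are finite also fails: the reducing curve $c=P\cap\Sigma_2$ bounds disks in both handlebodies, so the Dehn twist along $c$ lies in $\mathcal{E}_2$, stabilizes $[P]$, and has infinite order in $\mathcal{M}_2$ (it is a separating twist). Repairing these points amounts to redoing Akbas's stabilizer and edge-orbit computations from scratch, which is substantially more than the ``routine bookkeeping'' your outline allots to it.
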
 
\vspace{0.5cm}
\begin{center}
\begin{overpic}[width=11cm]{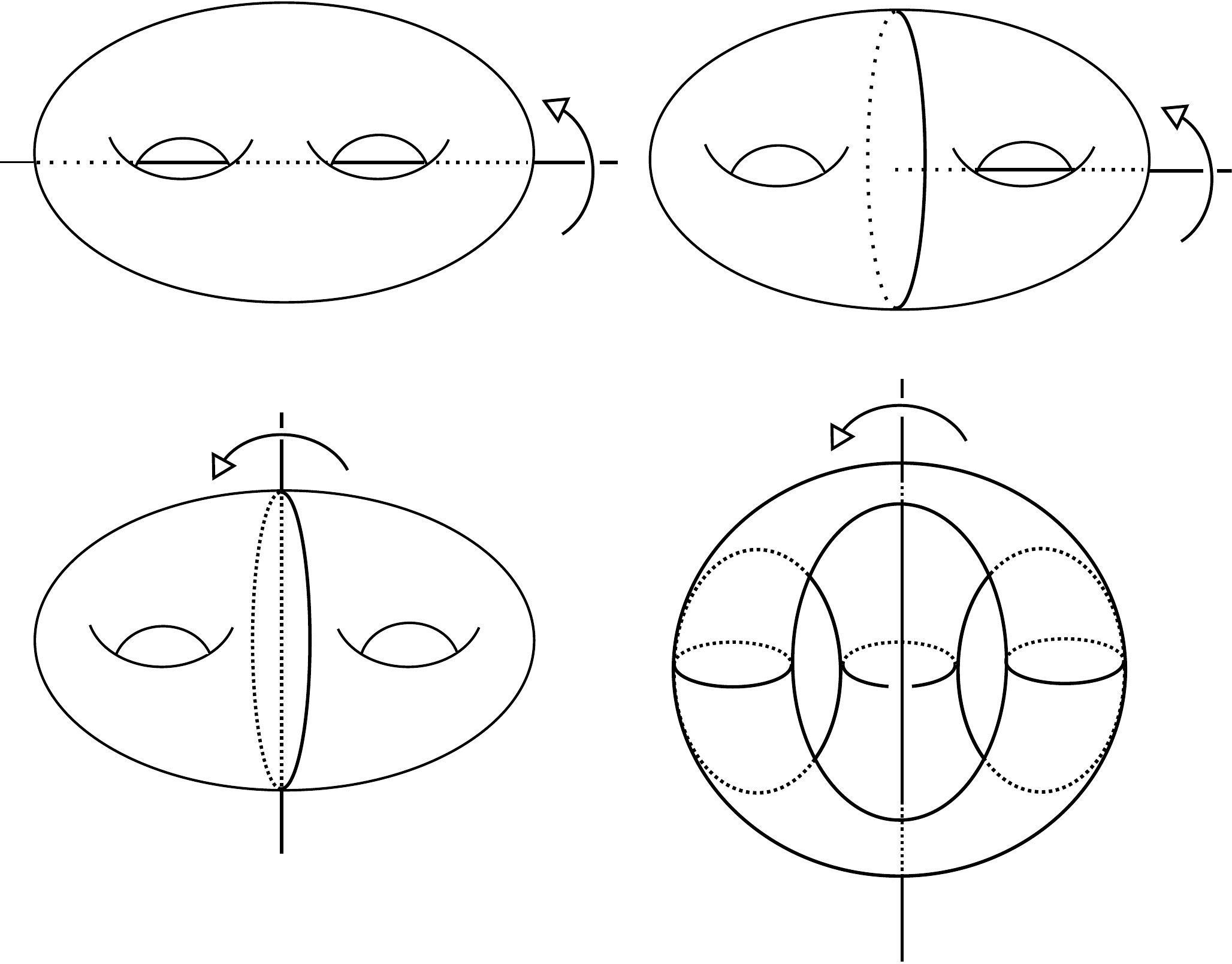}
\put(-5,75){(i)\ \ $\alpha$:}
\put(48,68){$\pi$}
\put(48,75){(ii)\ \ $\beta$:}
\put(98,68){$\pi$}
\put(-5,38){(iii)\ \ $\gamma$:}
\put(15,42){$\pi$}
\put(48,38){(iv)\ \ $\delta$:}
\put(65,45){$\frac{2\pi}{3}$}
\put(22,-5){Figure 2 : Generators of $\cal{E}_2$.}
\end{overpic}
\end{center}
\newpage
We define $\delta$ as follows. Consider the genus-two handlebody as a regular neighborhood of a sphere, centered at the origin, with three holes. The homeomorphism $\delta$ is a ${2\pi}/{3}$ rotation of the handlebody about the vertical $z$-axis. See Figure 2. Scharlemann \cite{Sch} showed that the group $\cal{E}_2$ is generated by isotopy classes $[\alpha],[\beta],[\gamma]$ and $[\delta]$.  Correspondence of homology classes of (iv) and the others are as follows:\\
\begin{center}
\begin{overpic}[width=7cm]{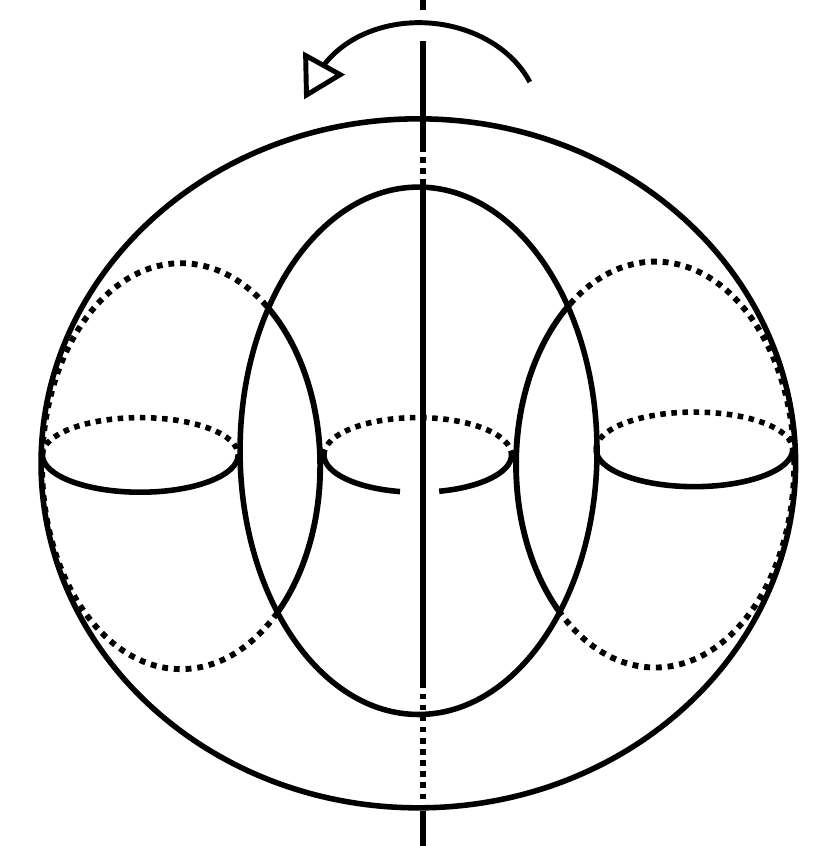}
\put(14,35){$x_1$}
\put(14,40.5){$<$}
\put(40,42.2){$\rotatebox{-20}{$>$}$}
\put(38.5,37){$x_1-x_2$}
\put(80,41.2){$>$}
\put(80,36){$x_2$}
\put(40,17.2){$\rotatebox{-30}{$>$}$}
\put(30,11){$y_1+y_2$}
\put(32.5,60){$\rotatebox{-64}{$<$}$}
\put(37.5,60){$y_1$}
\put(61,55){$\rotatebox{70}{$<$}$}
\put(57,60){$y_2$}
\end{overpic}
\end{center}


\section{Twisted cohomology group of   $\cal{E}_2$}
 
We denote by $A$ the ring $\mathbb{Z}$ or $\mathbb{Z}/n\mathbb{Z}$ for an integer $n\geq2$, and set $H_A = H_1(\Sigma_g;A)$. For a group $G$ and a left $G$-module $N$, a map $d:G \to N$ is called a \textit{crossed homomorphism} if it satisfies $d(gg')=d(g)+gd(g')$ for $g,g' \in G$.
Now let $Z^{1}(\cal{E}_g;H_A)$ be the set of all crossed homomorphisms $d:\cal{E}_g \to H_A$. Namely
\begin{align*}
Z^{1}(\cal{E}_g;H_A)=\{ d:\cal{E}_g \to H_A; d(\phi \psi)=d(\phi) + \phi d(\psi),\  \phi, \psi \in \cal{E}_g \}.
\end{align*}
Let $\pi : H_A \to Z^{1}(\cal{E}_g;H_A)$ be the homomorphism defined by 
\begin{align*}
\pi(u)(\phi)=\phi u - u
\end{align*}
for $u \in H_A$. Then as is well known we have
\begin{align*}
H^1(\cal{E}_g;H_A)= Z^{1}(\cal{E}_g;H_A)/\Ima\pi
\end{align*}
(cf. K.S. Brown \cite{Br}).

 
 We consider the case $g=2$. Then we have the homomorphism $\cal{E}_2 \to$ Aut$(H_1(\Sigma_2;\mathbb{Z}))$ induced by the action of the group $\cal{E}_2$ on $H_1(\Sigma_2;\mathbb{Z})$. The action of $\alpha,\beta,\gamma,$ and $\delta$ is as follows:
 \begin{itemize}
 \item[$\alpha_*$:]$\alpha_*(x_i)=-x_i$ and $\alpha_*(y_i)=-y_i$  ($i=1,2$).
 \item[$\beta_*$:]  $\beta_*(x_1)=x_1$, $\beta_*(x_2)=-x_2$, $\beta_*(y_1)=y_1$, $\beta_*(y_2)=-y_2$.
 \item[$\gamma_*$:] $\gamma_*(x_1)=-x_2$, $\gamma_*(x_2)=-x_1$,$\gamma_*(y_1)=-y_2$, $\gamma_*(y_2)=-y_1$.
 \item[$\delta_*$:] $\delta_*(x_1)=-x_1+x_2$, $\delta_*(x_2)=- x_1$, $\delta_*(y_1)=y_2$, $\delta_*(y_2)=-y_1-y_2$.
 \end{itemize}
For a group $G$ and a left $G$-module $N$, the \textit{coinvariant} $N_G$ is quotient module of $N$ by the subgroup $\{gn-n|g\in G, n\in N\}$.
\begin{lemm} 
\begin{align*} 
H_1(\Sigma_2)_{\mathcal{E}_2} =0.
\end{align*} 
\begin{proof}
Since we have $\alpha_* \delta_*^2(-x_2)=x_1-x_2$ and $\gamma_*(x_1)=-x_2$, we obtain $x_1=x_2=0\in H_1(\Sigma_2)_{\mathcal{E}_2}$.
And we have $\gamma_* \delta_*^2(y_1)=y_1+y_2$ and $\gamma_*(y_1)=-y_2$.
Hence we also obtain $y_1=y_2=0\in H_1(\Sigma_2)_{\mathcal{E}_2}$.
\end{proof}
\end{lemm}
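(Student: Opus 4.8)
The plan is to show that each of the four generators $x_1, x_2, y_1, y_2$ of $H_1(\Sigma_2)$ becomes $0$ in the coinvariant quotient $H_1(\Sigma_2)_{\mathcal{E}_2}$; since these elements generate the whole module, this forces the quotient to vanish. The guiding principle is that in $N_G$ one has $g\cdot n = n$ for every $g\in G$ and $n\in N$, so any identity $g_* v = w$ valid in $H_1(\Sigma_2)$ descends to the relation $v\equiv w$ in the quotient. The strategy is therefore to select a group element $g$ and a vector $v$ so cleverly that the difference $v-w$ between $v$ and its image $w=g_*v$ is a single basis vector, which is then seen to be trivial.

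A first cheap observation is that $\alpha_*(x_i)=-x_i$ already forces $2x_i\equiv 0$, and likewise $2y_i\equiv 0$; but this only detects two-torsion and is insufficient. The essential device is the order-three rotation $\delta$. For the $x$-generators I would first compute $\delta_*^2$, finding $\delta_*^2(x_2)=x_1-x_2$, so that $\alpha_*\delta_*^2(-x_2)=x_1-x_2$. Equating this image with $-x_2$ in the quotient yields $x_1\equiv 0$. Feeding this into the identity $\gamma_*(x_1)=-x_2$, which reads $x_1\equiv -x_2$, then gives $x_2\equiv 0$.

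For the $y$-generators I would argue symmetrically: computing $\gamma_*\delta_*^2(y_1)=y_1+y_2$ and equating the image with $y_1$ gives $y_2\equiv 0$, after which $\gamma_*(y_1)=-y_2$ forces $y_1\equiv 0$. Once all four generators are killed, the coinvariant is trivial, completing the argument.

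The only conceptual point, rather than a genuine obstacle, is recognizing that the involutions $\alpha$ and $\beta$ individually yield no more than two-torsion, so one must bring in the rotation $\delta$ through $\delta_*^2$ to eliminate it. Once the correct combinations $\alpha\delta^2$ and $\gamma\delta^2$ have been identified, the remaining verifications are routine matrix computations carried out directly from the explicitly tabulated action of $\alpha_*,\beta_*,\gamma_*,\delta_*$.
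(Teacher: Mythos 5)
Your proposal is correct and follows the paper's own argument exactly: it uses the same identities $\alpha_*\delta_*^2(-x_2)=x_1-x_2$ and $\gamma_*(x_1)=-x_2$ to kill $x_1,x_2$, and $\gamma_*\delta_*^2(y_1)=y_1+y_2$ and $\gamma_*(y_1)=-y_2$ to kill $y_1,y_2$. The supporting computations of $\delta_*^2$ and the resulting relations in the coinvariant module all check out.
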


\begin{lemm}Let $G_i$, $H_i$ and $K$ be G-modules $(i=1,2,3)$, and let
\begin{eqnarray*} 
&\cdots \to G_3\to G_2\to K \to G_1\to 0 \ \text{and}\\
&0 \to H_1\to K \to H_2 \to H_3 \to \cdots
\end{eqnarray*} 
be exact sequences. If $G_2 \to K \to H_2$ is an isomorphism, then we have $H_1 \to K \to G_1$ is isomorphism.
\end{lemm}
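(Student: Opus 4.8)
The plan is to extract from the two exact sequences the four relevant maps and to reduce everything to a direct sum decomposition of $K$. Write $f\colon G_2\to K$ and $g\colon K\to G_1$ for the last two maps of the first sequence, and $i\colon H_1\to K$ and $p\colon K\to H_2$ for the first two maps of the second sequence. Exactness then supplies four facts that I would record at the outset: $g$ is surjective (the first sequence ends in $G_1\to 0$), $\Ima f=\Ker g$, $i$ is injective (the second sequence begins $0\to H_1$), and $\Ima i=\Ker p$. The hypothesis is precisely that $\phi:=p\circ f\colon G_2\to H_2$ is an isomorphism, and the map whose bijectivity I must establish is $\psi:=g\circ i\colon H_1\to G_1$.

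The key step is to show that $K=\Ima f\oplus\Ker p$. First, since $\phi$ is injective $f$ is injective, and since $\phi$ is surjective $p$ is surjective. For the intersection, if $f(x)\in\Ker p$ then $\phi(x)=p(f(x))=0$, so $x=0$ by injectivity of $\phi$ and hence $f(x)=0$; thus $\Ima f\cap\Ker p=0$. For the sum, given $k\in K$, surjectivity of $\phi$ gives $x\in G_2$ with $\phi(x)=p(k)$, whence $k-f(x)\in\Ker p$ and $k=f(x)+(k-f(x))\in\Ima f+\Ker p$. (Equivalently, $\phi^{-1}\circ p$ is a retraction of $f$, so $f$ is a split injection with complement $\Ker p$.) Rewriting the two summands via exactness, this reads $K=\Ker g\oplus\Ima i$.

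With this decomposition in hand, the bijectivity of $\psi=g\circ i$ falls out. For injectivity, if $g(i(h))=0$ then $i(h)\in\Ker g=\Ima f$; but also $i(h)\in\Ima i$, and $\Ima f\cap\Ima i=\Ima f\cap\Ker p=0$, so $i(h)=0$ and then $h=0$ because $i$ is injective. For surjectivity, take $z\in G_1$; since $g$ is surjective choose $k\in K$ with $g(k)=z$, and decompose $k=f(x)+i(h)$ with $x\in G_2$ and $h\in H_1$. Because $\Ima f=\Ker g$ we obtain $z=g(k)=g(i(h))=\psi(h)$.

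Honestly there is no serious obstacle here: the statement is a purely homological diagram chase, and its entire content is the direct sum decomposition $K=\Ima f\oplus\Ker p$, i.e.\ the observation that an isomorphism $p\circ f$ forces $f$ to be a split injection whose complement is exactly $\Ker p$. The only point to watch is that exactness is used solely at the single term $K$ in each sequence, so the higher terms $G_3,H_3,\dots$ play no role and may be ignored throughout.
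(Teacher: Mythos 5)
Your proof is correct and rests on the same key observation as the paper's: the hypothesis that $p\circ f$ is an isomorphism splits $K$ as $\Ima f\oplus\Ker p=\Ker g\oplus\Ima i$, from which the bijectivity of $g\circ i$ follows. The paper packages this as two split short exact sequences and the block-diagonal composite $H_1\oplus H_2\to K\to G_1\oplus G_2$, but the substance is identical to your direct element chase on the single internal decomposition of $K$.
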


\begin{proof}
 Now we have the following diagram. Set $\Phi=g_1\circ f_1$ and $\Psi=f_2\circ g_2$.
\\
\\
$$
\xymatrix @R=15pt{
& & & & & 0 \ar[ld] \\
 & & & & H_1 \ar[ld]_{f_1} \ar@{.>} [d]\\
\cdots \ar[r] & G_3 \ar[r]^{g_3} & G_2 \ar@{.>}[d] \ar[r]^{g_2} & K \ar[ld]^{f_2}  \ar[r]_{g_1} & G_1 \ar[r] & 0 \\
  & & H_2 \ar[ld]^{f_3}\\
  & H_3 \ar[ld]\\
\rotatebox{70}{$\ddots$}
 }
$$
\\
\\
Note that $g_2$ is injective and $f_2$ is surjective if $G_2 \to K \to H_2$ is an isomorphism. Thus we have $g_3=0$ and $f_3=0$. 
Since $\Psi$ is isomorphism, we have $(\Psi^{-1} \circ f_2)\circ g_2=id_{G_{2}}$ and $f_2\circ(g_2 \circ \Psi^{-1})=id_{H_{2}}$. Hence those exact sequences are split and we have 
\begin{align*} 
&H_1 \oplus H_2 \overset{\simeq}{\longrightarrow} K: \quad (h_1,h_2) \mapsto f_1(h_1)+g_2\circ \Psi^{-1}(h_2),\\
&K  \overset{\simeq}{\longrightarrow} G_1 \oplus G_2 :\quad k \mapsto (g_{1}(k),\Psi^{-1}\circ f_2(k)).
\end{align*} 
A composition map $H_1 \oplus H_2 \to K \to G_1 \oplus G_2$ is 
\begin{eqnarray*} 
(h_1,h_2)&\mapsto& (g_1(f_1(h_1)+g_2 \circ \Psi^{-1}(h_2)),\Psi^{-1}\circ f_2(f_1(h_1)+g_2 \circ \Psi^{-1}(h_2)))\\
&=&(g_1\circ f_1(h_1)+g_1\circ g_2 \circ \Psi^{-1}(h_2),\Psi^{-1}\circ f_2 \circ f_1(h_1)+\Psi^{-1}\circ f_2 \circ g_2 \circ \Psi^{-1}(h_2))\\
&=&(\Phi(h_1),\Psi^{-1}(h_2)).
\end{eqnarray*} 
Hence $\Psi$ is an isomorphism.
\end{proof}
\newpage
\begin{lemm}In the case $g=2$, we have  
\begin{align*} 
H^1(\cal{E}_2;H_A)&\cong \{d \in Z^{1}(\cal{E}_2;H_A); \\& d([\delta])_1 - d([\alpha])_1 = d([\gamma])_2 - d([\beta])_2 = d([\gamma])_3 - d([\alpha])_3 = d([\beta])_4 - d([\delta])_4 = 0 \}.
\end{align*} 
\end{lemm}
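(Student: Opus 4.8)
The plan is to realize the asserted subgroup as a direct complement of $\Ima\pi$ inside $Z^{1}(\cal{E}_2;H_A)$, so that the composite $W \hookrightarrow Z^{1}(\cal{E}_2;H_A) \twoheadrightarrow Z^{1}(\cal{E}_2;H_A)/\Ima\pi = H^1(\cal{E}_2;H_A)$ is an isomorphism, where $W$ denotes the set of crossed homomorphisms cut out by the four displayed conditions. Since a crossed homomorphism $d$ is determined by the four values $d([\alpha]),d([\beta]),d([\gamma]),d([\delta]) \in H_A \cong A^{4}$, I regard $Z^{1}(\cal{E}_2;H_A)$ as a submodule of $(A^{4})^{4}$ and package the four conditions into a single $A$-linear map $Q=(Q_1,Q_2,Q_3,Q_4)\colon Z^{1}(\cal{E}_2;H_A)\to A^{4}$, where $Q_1(d)=d([\delta])_1-d([\alpha])_1$, $Q_2(d)=d([\gamma])_2-d([\beta])_2$, $Q_3(d)=d([\gamma])_3-d([\alpha])_3$ and $Q_4(d)=d([\beta])_4-d([\delta])_4$. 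By construction $W=\Ker Q$, so it suffices to prove $Z^{1}(\cal{E}_2;H_A)=\Ker Q \oplus \Ima\pi$, and for this it is enough to show that the composite $Q\circ\pi\colon H_A \to A^{4}$ is an isomorphism.

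Next I would compute $Q\circ\pi$ explicitly from the recorded action of $\alpha_*,\beta_*,\gamma_*,\delta_*$. For $u=a_1x_1+a_2x_2+b_1y_1+b_2y_2$ the crossed homomorphism $\pi(u)$ sends a generator $\phi$ to $\phi_*u-u$, and a short calculation gives $Q_1(\pi(u))=-a_2$, $Q_2(\pi(u))=a_2-a_1$, $Q_3(\pi(u))=b_1-b_2$ and $Q_4(\pi(u))=-b_1$. Thus, in the basis $x_1,x_2,y_1,y_2$, the map $Q\circ\pi$ is represented by the block-diagonal matrix built from $\left(\begin{smallmatrix}0&-1\\-1&1\end{smallmatrix}\right)$ and $\left(\begin{smallmatrix}1&-1\\-1&0\end{smallmatrix}\right)$, each of determinant $-1$. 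Hence $\det(Q\circ\pi)=1$, a unit whether $A=\mathbb{Z}$ or $A=\mathbb{Z}/n\mathbb{Z}$, so $Q\circ\pi$ is an isomorphism simultaneously for all the coefficient rings under consideration. The point of these particular four conditions is precisely that they produce this unimodular matrix.

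From here the conclusion is formal. Injectivity of $Q\circ\pi$ forces $\pi$ to be injective and $\Ima\pi\cap\Ker Q=0$, while surjectivity of $Q\circ\pi$ shows that every $d$ differs from an element of $\Ima\pi$ by an element of $\Ker Q$; hence $Z^{1}(\cal{E}_2;H_A)=\Ima\pi\oplus W$ and $H^1(\cal{E}_2;H_A)\cong W$. Equivalently, one may invoke Lemma 2.2 with $K=Z^{1}(\cal{E}_2;H_A)$, taking $0\to H_A \xrightarrow{\pi} K \to H^1(\cal{E}_2;H_A)\to 0$ for the first sequence (so $G_2=H_A$, $G_1=H^1(\cal{E}_2;H_A)$) and $0\to W \hookrightarrow K \xrightarrow{Q} A^{4}\to \Ck Q \to 0$ for the second (so $H_1=W$, $H_2=A^4$); the hypothesis that $G_2\to K\to H_2$, namely $Q\circ\pi$, is an isomorphism is exactly what was checked, and the lemma yields that $W\to K\to H^1(\cal{E}_2;H_A)$ is an isomorphism.

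The one genuinely computational step is verifying the four values $Q_i(\pi(u))$ and hence the unimodularity of $Q\circ\pi$; everything else is bookkeeping. The subtlety worth emphasizing is uniformity in $A$: because the determinant equals $1$ rather than merely some nonzero integer, the same four conditions describe $H^1$ for every admissible $A$ at once, including the torsion rings $\mathbb{Z}/n\mathbb{Z}$ that the eventual homology computation will require. I would also confirm that the generator–values viewpoint legitimately identifies $Z^{1}(\cal{E}_2;H_A)$ with a submodule of $(A^{4})^{4}$, but this is immediate, since a crossed homomorphism is determined by its values on the generators and the relations (P1)–(P3) impose only linear constraints on those values.
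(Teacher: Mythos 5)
Your proposal is correct and follows essentially the same route as the paper: the paper defines the same evaluation map (called $f$ there), computes $f\circ\pi$ explicitly on $H_A$, observes that the resulting matrix is invertible over $A$, and concludes $H^1(\cal{E}_2;H_A)\cong\Ker f$ via its Lemma 2.2, which is exactly your splitting $Z^{1}(\cal{E}_2;H_A)=\Ima\pi\oplus\Ker Q$. Your entries for $Q\circ\pi$ differ from the paper's in sign in the first coordinate (the paper's tabulated $\delta v-v$ uses the transposed matrix for $\delta_*$), but both matrices are unimodular, so the argument is unaffected.
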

\begin{proof}
\one Let $f: Z^{1}(\cal{E}_2;H_A) \to A^{4}$ be a homomorphism defined by
\begin{align*}
f(d)=(d([\delta])_1 - d([\alpha])_1 , d([\gamma])_2 - d([\beta])_2 , d([\gamma])_3 - d([\alpha])_3 , d([\beta])_4 - d([\delta])_4 ).
\end{align*}
Since we have
\begin{align*}
\alpha v -v= &(-2v_1 , -2v_2 ,-2v_3 ,-2v_4),& \\
\beta v - v= &(0 , -2v_2 ,0 ,-2v_4),&\\
\gamma v - v=&(-v_1- v_2 , -v_1- v_2 ,-v_3-v_4 ,- v_3-v_4),&\\
\delta v - v =&(-2v_1+ v_2, -v_1- v_2, -v_3+ v_4, -v_3- 2v_4),&
\end{align*}
the composition map $f\circ \pi \one: \one H_A \to A^4$ is written as 
\[
f\circ \pi(v)=(v_2 , -v_1+v_2 , v_3-v_4 , -v_3) \hspace{5mm} 
\]
for $v \in H_A$. This map is an isomorphism. We have the following diagram.
$$
\xymatrix{
 & & & \Ker f \ar[ld] \ar@{.>} [d]  & 0 \ar[l] \\
 0 \ar[r] & H_A \ar@{.>}[d]_{\cong} \ar[r]^{\pi \hspace{5mm} \ } & Z^{1}(\cal{E}_2;H_A) \ar[ld]^{f}  \ar[r] & \Ck \pi \ar[r] & 0 \\
 0  & A^4 \ar[l]\\
 }
$$
By Lemma $2.2$, we have
\[
H^1(\cal{E}_2;H_A)= Z^{1}(\cal{E}_2;H_A)/\Ima\pi \cong \Ker f.
\]
\end{proof}

The group $\cal{E}_2$ is generated by $[\alpha],[\beta],[\gamma],$ and $[\delta]$. Therefore, all crossed homomorphisms $d : \cal{E}_2 \to H_A$ are determined by the values  $d([\alpha]),d([\beta]),d([\gamma])$ and $d([\delta]) $. If $d \in Z^{1}(\cal{E}_2,H_A)$, we can set
 
\begin{align*}
d([\alpha])=\om_{11}x_1 + \om_{12}x_2 + \om_{13}y_1 + \om_{14}y_2, \\
d([\beta])=\om_{21}x_1 + \om_{22}x_2 + \om_{23}y_1 + \om_{24}y_2,\\
d([\gamma])=\om_{31}x_1 + \om_{32}x_2 + \om_{33}y_1 + \om_{34}y_2,\\
d([\delta])=\om_{41}x_1 + \om_{42}x_2 + \om_{43}y_1 + \om_{44}y_2.
\end{align*}
 Then we have    
\begin{align*}
H^1(\cal{E}_2;H_A)\cong \{d \in Z^{1}(\cal{E}_2;H_A)\  ; \  \om_{41} - \om_{11} = \om_{32} - \om_{22} =\om_{33} - \om_{13} = \om_{24} - \om_{44} = 0 \}.
\end{align*}
\section{relations of $\om_{ij}$}
In this section we shall consider the case $g=2$. We denote $d([\phi])$ for $\phi \in \cal{E}_2$ simply by $d(\phi)$.
\begin{lemm}
We have relations:
\begin{align*}
\begin{cases}
\om_{11} + \om_{12} = \om_{31} + \om_{32}, &(1a) \\
\om_{13} + \om_{14} = \om_{33} + \om_{34}. &(1b)
\end{cases}
\end{align*}
\end{lemm}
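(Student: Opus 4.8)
The plan is to read off both relations from a single defining relation of Akbas's presentation (Theorem 1.2), namely $[\alpha\gamma]^2 = 0$ in (P1), which asserts that $\alpha\gamma\alpha\gamma$ is the identity of $\cal{E}_2$. The underlying principle is that applying a crossed homomorphism $d$ to any relator $w=1$ forces $d(w)=0$ in $H_A$, and the cocycle rule $d(gg')=d(g)+g_* d(g')$ lets me rewrite $d(w)$ entirely in terms of the four vectors $d(\alpha),d(\beta),d(\gamma),d(\delta)$ together with the explicit $\cal{E}_2$-action on $H_1(\Sigma_2)$ recorded in Section 2. Each relator thus yields one vector identity, hence up to four scalar equations among the $\om_{ij}$; relations (1a) and (1b) will be precisely the two independent equations produced by $[\alpha\gamma]^2=0$.

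First I note that $d(1)=0$: from $d(1)=d(1\cdot 1)=d(1)+1_* d(1)=2d(1)$ one gets $d(1)=0$ over any coefficient ring $A$. Next I expand $d(\alpha\gamma\alpha\gamma)$ by the cocycle rule, obtaining
\[
d(\alpha\gamma\alpha\gamma) = d(\alpha) + \alpha_* d(\gamma) + \alpha_*\gamma_* d(\alpha) + \alpha_*\gamma_*\alpha_* d(\gamma) = 0.
\]
The bookkeeping simplifies dramatically because $\alpha_*$ acts as $-\mathrm{id}$ on $H_1(\Sigma_2)$: the four terms telescope to $d(\alpha)-d(\gamma)-\gamma_* d(\alpha)+\gamma_* d(\gamma)$, which I rearrange as
\[
(\mathrm{id}-\gamma_*)\,d(\alpha) = (\mathrm{id}-\gamma_*)\,d(\gamma).
\]

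Finally I substitute the action of $\gamma_*$. Writing a vector as $(v_1,v_2,v_3,v_4)$ in the basis $x_1,x_2,y_1,y_2$, one has $\gamma_* v=(-v_2,-v_1,-v_4,-v_3)$, so $(\mathrm{id}-\gamma_*)v=(v_1+v_2,\,v_1+v_2,\,v_3+v_4,\,v_3+v_4)$. Applying this to $d(\alpha)=(\om_{11},\om_{12},\om_{13},\om_{14})$ and $d(\gamma)=(\om_{31},\om_{32},\om_{33},\om_{34})$ and comparing first coordinates gives $\om_{11}+\om_{12}=\om_{31}+\om_{32}$, that is (1a), while comparing third coordinates gives $\om_{13}+\om_{14}=\om_{33}+\om_{34}$, that is (1b); the second and fourth coordinates merely repeat these two equations.

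The computation is entirely routine, so I do not expect a genuine obstacle. The only point requiring care is the order of the iterated composition $\alpha_*\gamma_*\alpha_*$ when expanding the cocycle; but since $\alpha_*=-\mathrm{id}$ is a scalar, these signs collapse and the expansion reduces cleanly to the operator $\mathrm{id}-\gamma_*$ applied to $d(\alpha)$ and $d(\gamma)$.
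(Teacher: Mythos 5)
Your proposal is correct and follows essentially the same route as the paper: both apply the cocycle expansion to the relator $(\alpha\gamma)^2=1$ from (P1) and compare coefficients in the basis $x_1,x_2,y_1,y_2$. Your observation that $\alpha_*=-\mathrm{id}$ collapses the expansion to $(\mathrm{id}-\gamma_*)d(\alpha)=(\mathrm{id}-\gamma_*)d(\gamma)$ is just a cleaner packaging of the term-by-term computation the paper carries out.
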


\begin{proof}
By the relations $(\alpha \gamma)^2=0$ in (P1), we have $d((\alpha \gamma)^2)=0$. The equation
\begin{eqnarray*}
d((\alpha\gamma)^2)
&=& d(\alpha\gamma)+\alpha\gamma d(\alpha\gamma)\\
&=& d(\alpha)+\alpha d(\alpha)+\alpha\gamma d(\alpha) + \alpha\gamma\alpha d(\gamma)\\
&=& \om_{11}x_1 + \om_{12}x_2 + \om_{13}y_1 + \om_{14}y_2 +( -\om_{31}x_1 - \om_{32}x_2 - \om_{33}y_1 - \om_{34}y_2 )\\
&& \hspace{-2.5mm} +\om_{12}x_1 + \om_{11}x_2 + \om_{14}y_1 + \om_{13}y_2 + ( -\om_{32}x_1 - \om_{31}x_2 - \om_{34}y_1 - \om_{33}y_2) \\
&=&(\om_{11} + \om_{12} - \om_{31} - \om_{32})x_1 + (\om_{11} + \om_{12} - \om_{31} - \om_{32})x_2 \\
&&\hspace{-2.5mm} +(\om_{13} + \om_{14} - \om_{33} - \om_{34})y_1 +(\om_{13} + \om_{14} - \om_{33} - \om_{34})y_2\\
&=&0
\end{eqnarray*}
holds.
Hence, we obtain $(1a)$ and $(1b)$.
\end{proof}

\begin{lemm}
We have relations:
\begin{align*}
\begin{cases}
2\om_{11} + \one \one  \om_{12} = 2\om_{41}, & (2a) \\
\one -\om_{11} + \om_{12} = 2\om_{42},  &(2b) \\
\one\one \om_{13} +\one \one  \om_{14} = 2\om_{43}, &(2c) \\
-\om_{13} + 2\om_{14} = 2\om_{44}, & (2d)
\end{cases}
\end{align*}
\end{lemm}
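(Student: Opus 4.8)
The plan is to derive these four relations from the defining relation $[\alpha\delta\alpha]=[\delta]$ in (P2), in exact parallel to the way Lemma~3.1 extracted $(1a)$ and $(1b)$ from $(\alpha\gamma)^2=0$. This is the one relation in Akbas's presentation that couples $\alpha$ and $\delta$, so it is the natural source of constraints tying together the coefficients $\om_{1j}$ of $d(\alpha)$ and $\om_{4j}$ of $d(\delta)$. Since $d$ is a crossed homomorphism and $[\alpha\delta\alpha]=[\delta]$ holds in $\cal{E}_2$, I would start from the identity $d(\alpha\delta\alpha)=d(\delta)$.

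Next I would expand the left-hand side by applying the cocycle identity $d(gg')=d(g)+g\,d(g')$ twice, obtaining
\[
d(\alpha\delta\alpha)=d(\alpha)+\alpha\,d(\delta)+\alpha\delta\,d(\alpha),
\]
so that the relation reads
\[
d(\alpha)+\alpha\,d(\delta)+\alpha\delta\,d(\alpha)=d(\delta).
\]
Then I would substitute the explicit action. Because $\alpha_*=-\mathrm{id}$ on $H_1(\Sigma_2)$, the middle term collapses to $\alpha\,d(\delta)=-d(\delta)$, and $\alpha\delta\,d(\alpha)=-\delta_*d(\alpha)$. Computing $\delta_*d(\alpha)$ from $\delta_*(x_1)=-x_1+x_2$, $\delta_*(x_2)=-x_1$, $\delta_*(y_1)=y_2$, $\delta_*(y_2)=-y_1-y_2$ gives
\[
\delta_*d(\alpha)=(-\om_{11}-\om_{12})x_1+\om_{11}x_2-\om_{14}y_1+(\om_{13}-\om_{14})y_2.
\]

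Finally I would move $d(\delta)$ to the left and rewrite the relation as $d(\alpha)-\delta_*d(\alpha)=2\,d(\delta)$, then read off the coefficients of $x_1,x_2,y_1,y_2$. The $x$-components yield $2\om_{11}+\om_{12}=2\om_{41}$ and $-\om_{11}+\om_{12}=2\om_{42}$, namely $(2a)$ and $(2b)$, while the $y$-components yield $\om_{13}+\om_{14}=2\om_{43}$ and $-\om_{13}+2\om_{14}=2\om_{44}$, namely $(2c)$ and $(2d)$. I do not expect any genuine obstacle: the only delicate point is the sign bookkeeping in $\delta_*d(\alpha)$, and the simplification $\alpha_*=-\mathrm{id}$ makes the $\alpha$-contributions trivial.
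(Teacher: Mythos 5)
Your proposal is correct and follows essentially the same route as the paper: both expand $d(\alpha\delta\alpha)=d(\alpha)+\alpha\,d(\delta)+\alpha\delta\,d(\alpha)$ from the relation $\alpha\delta\alpha=\delta$, use $\alpha_*=-\mathrm{id}$ together with the explicit action of $\delta_*$, and compare coefficients of $x_1,x_2,y_1,y_2$. Your intermediate rearrangement $d(\alpha)-\delta_*d(\alpha)=2\,d(\delta)$ is just a tidier bookkeeping of the same computation, and your value of $\delta_*d(\alpha)$ checks out.
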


\begin{proof}
By the relations $\alpha \delta \alpha=\delta$ in (P2), we have $d(\alpha \delta \alpha)=d(\delta)$ and
\begin{eqnarray*}
d(\alpha \delta \alpha)
&=&d(\alpha)+\alpha d(\delta) + \alpha \delta d(\alpha) \\
&=&\om_{11}x_1 + \om_{12}x_2 + \om_{13}y_1 +\om_{14}y_2 +(-\om_{41}x_1 - \om_{42}x_2 - \om_{43}y_1 - \om_{44}y_2) \\
&& \hspace{-2.5mm} +\om_{11}(x_1-x_2) + \om_{12}x_1- \om_{13}y_2 + \om_{14}(y_1+y_2) \\
&=& (2\om_{11} - \om_{41} + \om_{12})x_1 +(\om_{12} - \om_{42} - \om_{11})x_2 \\
&& \hspace{-2.5mm} +(\om_{13} - \om_{43} + \om_{14})y_2 +(2\om_{14} - \om_{44} - \om_{13})y_2, \\
d(\delta)&=&\om_{41}x_1 + \om_{42}x_2 + \om_{43}y_1 + \om_{44}y_2.
\end{eqnarray*}
Comparing $d(\alpha \delta \alpha)$ and $d(\delta)$, we obtain $(2a)-(2d)$.
\end{proof}

\begin{lemm} We have relations:
\begin{align*}
\begin{cases}
2\om_{21} = 2\om_{23} = 0 , & (3a) \\
2\om_{12} = 2\om_{22}, & (3b) \\
2\om_{14} = 2\om_{24}. & (3c)
\end{cases}
\end{align*}
\end{lemm}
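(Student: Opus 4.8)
The plan is to mimic the strategy already used in Lemmas 3.1 and 3.2: take a defining relation of $\cal{E}_2$ from Akbas's presentation, apply the crossed-homomorphism identity $d(\phi\psi)=d(\phi)+\phi\, d(\psi)$ to expand $d$ of both sides, and then compare coefficients in the basis $\{x_1,x_2,y_1,y_2\}$ using the explicit action of $\alpha_*$ and $\beta_*$ on $H_1(\Sigma_2)$ recorded in Section~2.

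The single relation I would exploit is $\alpha\beta\alpha=\beta$ from (P2), which in fact yields all of $(3a)$--$(3c)$ at once. First I would expand
\[
d(\alpha\beta\alpha)=d(\alpha)+\alpha\, d(\beta)+\alpha\beta\, d(\alpha),
\]
substitute the coordinate expressions for $d(\alpha)$ and $d(\beta)$, and apply $\alpha_*(x_i)=-x_i,\ \alpha_*(y_i)=-y_i$ together with $\beta_*(x_1)=x_1,\ \beta_*(x_2)=-x_2,\ \beta_*(y_1)=y_1,\ \beta_*(y_2)=-y_2$. After collecting terms, the $x_1$- and $y_1$-coefficients of $d(\alpha\beta\alpha)$ come out as $-\om_{21}$ and $-\om_{23}$, while the $x_2$- and $y_2$-coefficients come out as $2\om_{12}-\om_{22}$ and $2\om_{14}-\om_{24}$.

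Comparing this with $d(\beta)=\om_{21}x_1+\om_{22}x_2+\om_{23}y_1+\om_{24}y_2$ then forces $-\om_{21}=\om_{21}$ and $-\om_{23}=\om_{23}$, which is $(3a)$, along with $2\om_{12}-\om_{22}=\om_{22}$ and $2\om_{14}-\om_{24}=\om_{24}$, which rearrange to $(3b)$ and $(3c)$. As an independent check on $(3a)$, the relation $\beta^2=0$ from (P1) recovers it directly: since $d(\beta^2)=d(\beta)+\beta\, d(\beta)$ and $\beta_*$ fixes the $x_1$- and $y_1$-directions, the $x_1$- and $y_1$-coefficients of $d(\beta^2)$ are exactly $2\om_{21}$ and $2\om_{23}$.

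I do not anticipate any genuine obstacle here beyond careful bookkeeping; the computation is entirely parallel to that of Lemma~3.2. The one point that requires attention is the order in which the two actions are applied inside the composite term $\alpha\beta\, d(\alpha)$, namely $\beta_*$ first and then $\alpha_*$, since transposing them would corrupt the signs on the $x_2$- and $y_2$-coefficients and spoil $(3b)$ and $(3c)$.
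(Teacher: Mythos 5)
Your proposal is correct and follows exactly the paper's own argument: expanding $d(\alpha\beta\alpha)=d(\alpha)+\alpha\,d(\beta)+\alpha\beta\,d(\alpha)$ and comparing coefficients with $d(\beta)$ yields precisely the coefficients $-\om_{21}$, $2\om_{12}-\om_{22}$, $-\om_{23}$, $2\om_{14}-\om_{24}$ that the paper obtains, giving $(3a)$--$(3c)$. The additional cross-check of $(3a)$ via $\beta^2=0$ is sound but not needed.
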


\begin{proof}
By the relation $\alpha \beta \alpha=\beta$ in (P2), we have $d(\alpha \beta \alpha) = d(\beta)$ and
\begin{eqnarray*}
d(\alpha \beta \alpha) 
&=& d(\alpha) + \alpha d(\beta \alpha) \\
&=& d(\alpha) + \alpha d(\beta) + \alpha \beta d(\alpha) \\
&=& \om_{11}x_1 + \om_{12}x_2 + \om_{13}y_1 + \om_{14}y_2 \\
&&\hspace{-2.5mm} -\om_{21}x_1 - \om_{22}x_2 - \om_{23}y_1 - \om_{24}y_2 \\
&&\hspace{-2.5mm} -\om_{11}x_1 + \om_{12}x_2 - \om_{13}y_1 + \om_{14}y_2 \\
&=&\hspace{-2.5mm} -\om_{21}x_1 + (2\om_{12} - \om_{22})x_2 -\om_{23}y_1 +(2\om_{14} - \om_{24})y_2, \\
d(\beta)&=&\om_{21}x_1 + \om_{22}x_2 + \om_{23}y_2 + \om_{24}y_2.
\end{eqnarray*}
Comparing  $d(\alpha \beta \alpha)$ and $d(\beta)$, we obtain $(3a)-(3c)$.
\end{proof}

\begin{lemm} We have relations:
\begin{align*}
\begin{cases}
\om_{31} - \om_{32} - \om_{22} = \om_{11} + \om_{21}, &  (4a) \\
\om_{32} + \om_{31} - \om_{21} = \om_{12} - \om_{22}, &  (4b)\\
\om_{33} - \om_{34} - \om_{24} = \om_{13} + \om_{23},  & (4c) \\
\om_{34} + \om_{33} - \om_{23} = \om_{14} - \om_{24}.  & (4d) 
\end{cases}
\end{align*}
\end{lemm}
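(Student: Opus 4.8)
The plan is to imitate exactly the computation carried out in Lemmas 3.1--3.3. The statement to prove is a list of four relations among the coefficients $\om_{ij}$ that must hold for any crossed homomorphism $d \in Z^1(\cal{E}_2;H_A)$. Each of the preceding lemmas extracted its relations from one of the defining relators of $\cal{E}_2$ in Akbas's presentation (Theorem 1.2): Lemma 3.1 used $(\alpha\gamma)^2 = 0$ from (P1), and Lemmas 3.2 and 3.3 used the two relations in (P2). The relations (4a)--(4d) clearly come from the first relation in (P3), namely $\gamma\beta\gamma = \alpha\beta$, so I would apply $d$ to both sides of that identity.

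First I would compute $d(\gamma\beta\gamma)$ by repeatedly using the crossed-homomorphism identity $d(\phi\psi) = d(\phi) + \phi\, d(\psi)$, giving
\begin{align*}
d(\gamma\beta\gamma) = d(\gamma) + \gamma\, d(\beta) + \gamma\beta\, d(\gamma).
\end{align*}
Next I would expand each term using the action of $\gamma_*$ and $\beta_*$ on the basis $\{x_1,x_2,y_1,y_2\}$ recorded in Section 2: $\gamma_*$ sends $x_1 \mapsto -x_2$, $x_2 \mapsto -x_1$, $y_1 \mapsto -y_2$, $y_2 \mapsto -y_1$, while $\beta_*$ fixes $x_1,y_1$ and negates $x_2,y_2$. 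In parallel I would compute the right-hand side $d(\alpha\beta) = d(\alpha) + \alpha\, d(\beta)$, where $\alpha_*$ simply negates every basis element. Then, substituting the general expressions $d(\gamma) = \sum_j \om_{3j}(\cdots)$, $d(\beta) = \sum_j \om_{2j}(\cdots)$, $d(\alpha) = \sum_j \om_{1j}(\cdots)$ into both sides, I would collect coefficients of $x_1,x_2,y_1,y_2$ separately and set the two sides equal.

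Comparing the coefficient of each basis vector then yields four scalar equations, which I expect to rearrange directly into (4a)--(4d): the $x_1$ and $x_2$ coefficients should give (4a) and (4b), and the $y_1$ and $y_2$ coefficients should give (4c) and (4d). The computation is entirely mechanical once the actions are applied correctly, so there is no genuine obstacle of a conceptual nature. The one place demanding care — and the likeliest source of a sign slip — is the innermost term $\gamma\beta\, d(\gamma)$, since it requires composing the actions $\gamma_* \circ \beta_*$ before applying them to the coefficients of $d(\gamma)$; I would double-check that composite action ($x_1 \mapsto -x_2$, $x_2 \mapsto x_1$, $y_1 \mapsto -y_2$, $y_2 \mapsto y_1$, reading $\gamma_*\beta_*$ as first $\beta_*$ then $\gamma_*$) against the orientation conventions before equating coefficients. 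Because Akbas's presentation is stated in terms of the generators rather than their homological actions, I would also verify that the second relation of (P3), $\delta = \gamma\delta^2\gamma$, produces relations that are either redundant or reserved for a subsequent lemma, so that the present lemma correctly isolates only the consequences of $\gamma\beta\gamma = \alpha\beta$.
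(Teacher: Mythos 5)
Your proposal is correct and follows exactly the paper's own proof: apply $d$ to the relator $\gamma\beta\gamma=\alpha\beta$, expand $d(\gamma\beta\gamma)=d(\gamma)+\gamma\,d(\beta)+\gamma\beta\,d(\gamma)$ and $d(\alpha\beta)=d(\alpha)+\alpha\,d(\beta)$ using the recorded actions on $H_1(\Sigma_2)$, and compare coefficients of $x_1,x_2,y_1,y_2$. The only caution is the one you already flag: the paper's printed expansion of $\gamma\beta\,d(\gamma)$ uses the composite $\beta_*\circ\gamma_*$ rather than your $\gamma_*\circ\beta_*$ (the two differ by an overall sign here, and the paper is not consistent with its own convention in the analogous term of Lemma 3.5), so you must fix one composition convention and use it uniformly to land on the signs as stated in $(4a)$--$(4d)$.
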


\begin{proof}
By the relation $\gamma \beta \gamma=\alpha \beta$ in (P3), we have $d(\gamma \beta \gamma)=d(\alpha \gamma)$ and
\begin{eqnarray*}
d(\gamma \beta \gamma) 
&=&d(\gamma) + \gamma d(\beta \gamma) \\
&=&d(\gamma) + \gamma d(\beta) + \gamma \beta d(\gamma) \\
&=&\om_{31}x_1 + \om_{32}x_2 + \om_{33}y_1 + \om_{34}y_2 \\
&&\hspace{-2.5mm} -\om_{21}x_2 - \om_{22}x_1 - \om_{23}y_2 - \om_{24}y_1 \\
&&\hspace{-2.5mm} +\om_{31}x_2 - \om_{32}x_1 + \om_{33}y_2 - \om_{34}y_1 \\
&=& (\om_{31} - \om_{32} - \om_{22})x_1 + (\om_{31} + \om_{32} - \om_{21})x_2 \\
&&\hspace{-2.5mm} + (\om_{33} - \om_{34} - \om_{24})y_1 + (\om_{33} + \om_{34} - \om_{23})y_2, \\
d(\alpha \beta)
&=& d(\alpha) + \alpha d(\beta) \\
&=& (\om_{11} + \om_{21})x_1 + (\om_{12} - \om_{22})x_2 + (\om_{13} - \om_{23})y_1 + (\om_{14} + \om_{24})y_2.
\end{eqnarray*}
Comparing  $d(\gamma \beta \gamma)$ and $d(\alpha \beta)$, we obtain $(4a)-(4d)$.
\end{proof}

\begin{lemm} We have relations:
\begin{align*}
\begin{cases}
2\om_{31}+\om_{32} = 2\om_{41} + \om_{42}, & (5a) \\
\hspace{1.2cm} \om_{33} = \one \one \om_{43}. & (5b)
\end{cases}
\end{align*}
\end{lemm}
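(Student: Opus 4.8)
The plan is to use the single relation in Akbas's presentation that has not yet been exploited and that involves only $\gamma$ and $\delta$. Since (5a) and (5b) concern only the coefficients $\om_{3j}$ of $d(\gamma)$ and $\om_{4j}$ of $d(\delta)$, the defining relation that produces them should be a word in $\gamma$ and $\delta$ alone; inspecting (P1)--(P3), the only remaining such relation is $[\delta]=[\gamma\delta^2\gamma]$ from (P3). Applying the crossed homomorphism $d$ gives $d(\delta)=d(\gamma\delta^2\gamma)$, and I would expand the right-hand side, collect the coefficients in the basis $\{x_1,x_2,y_1,y_2\}$, and compare with $d(\delta)=\om_{41}x_1+\om_{42}x_2+\om_{43}y_1+\om_{44}y_2$.

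First I would unfold the right-hand side by the cocycle rule $d(\phi\psi)=d(\phi)+\phi\,d(\psi)$, peeling generators off from the left, to obtain
\[
d(\gamma\delta^2\gamma)=d(\gamma)+\gamma\,d(\delta)+\gamma\delta\,d(\delta)+\gamma\delta^2\,d(\gamma).
\]
This is exactly the bookkeeping already used in Lemmas 3.1--3.4, only for a longer word.

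The substantive computation is to evaluate the three twisted terms. I would first record the action of $\delta^2$ on the basis from $\delta_*$, namely $\delta^2(x_1)=-x_2$, $\delta^2(x_2)=x_1-x_2$, $\delta^2(y_1)=-y_1-y_2$, $\delta^2(y_2)=y_1$, and then compose with $\gamma_*$ to tabulate the actions of $\gamma\delta$ and $\gamma\delta^2$ that the middle and last terms require. Substituting the coordinate expansions of $d(\gamma)$ and $d(\delta)$ and summing, the four coordinates collapse as follows: the $x_2$-coefficient becomes the vacuous identity $\om_{42}=\om_{42}$; the $x_1$-coefficient gives $\om_{41}=2\om_{31}+\om_{32}-\om_{41}-\om_{42}$, which rearranges to (5a); the $y_2$-coefficient gives $\om_{44}=\om_{33}-\om_{43}+\om_{44}$, which rearranges to (5b); and the $y_1$-coefficient gives $2\om_{33}=2\om_{43}$, a consequence of (5b) that may be discarded.

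I expect the main obstacle to be purely the twisted bookkeeping: one must compose the module actions in the correct order (the rightmost generator acting first), and keep the two middle contributions $\gamma\,d(\delta)$ and $\gamma\delta\,d(\delta)$ carefully distinct from the last contribution $\gamma\delta^2\,d(\gamma)$, since a single transposition there would spoil both relations. Once the composite actions $\gamma\delta$ and $\gamma\delta^2$ are tabulated correctly, the comparison is mechanical. As a sanity check I would note that the other unused relation $[\delta]^3=0$ yields nothing, because $1+\delta_*+\delta_*^2=0$ on $H_A$ forces $d(\delta^3)=(1+\delta_*+\delta_*^2)\,d(\delta)=0$ identically; this confirms that $[\delta]=[\gamma\delta^2\gamma]$ is the relation carrying all the content for (5a) and (5b).
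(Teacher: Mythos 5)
Your proposal is correct and follows essentially the same route as the paper: apply $d$ to the relation $[\delta]=[\gamma\delta^2\gamma]$, expand via $d(\gamma\delta^2\gamma)=d(\gamma)+\gamma d(\delta)+\gamma\delta d(\delta)+\gamma\delta^2 d(\gamma)$, and compare coefficients with $d(\delta)$; your tabulated actions of $\delta^2$, $\gamma\delta$, $\gamma\delta^2$ and the resulting four coordinate equations (with $x_1$ giving (5a), $y_2$ giving (5b), $x_2$ vacuous, and $y_1$ redundant) all check out against the paper's computation. The added observation that $[\delta]^3=0$ contributes nothing because $1+\delta_*+\delta_*^2=0$ is a correct and worthwhile sanity check not made explicit in the paper.
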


\begin{proof}
By the relation $\gamma \delta^2 \gamma=\delta$ in (P3), we have $d(\gamma \delta^2 \gamma)=d(\delta)$ and
\begin{eqnarray*}
d(\gamma \delta^2 \gamma) 
&=&d(\gamma) + \gamma d(\delta^2 \gamma) \\
&=&d(\gamma) + \gamma d(\delta) + \gamma \delta d(\delta) + \gamma \delta^2 d(\gamma) \\
&=&\om_{31}x_1 + \om_{32}x_2 + \om_{33}y_1 + \om_{34}y_2 \\
&&\hspace{-2.5mm} - \om_{42}x_1 -\om_{41}x_2  - \om_{44}y_1- \om_{43}y_2  \\
&&\hspace{-2.5mm}  -\om_{41}x_1 +(\om_{41}+ \om_{42})x_2 + (-\om_{43}+\om_{44})y_1 + \om_{44}y_2 \\
&&\hspace{-2.5mm}  +(\om_{31}+\om_{32})x_1-\om_{32}x_2 +\om_{33}y_1 + (\om_{33}-\om_{34})y_2 \\
&=& (2\om_{31} - \om_{41} - \om_{42}+\om_{32})x_1 + \om_{42}x_2\\
&&\hspace{-2.5mm} + (2\om_{33} - \om_{43})y_1 + (\om_{33} + \om_{44} - \om_{43})y_2, \\
d(\delta)
&=&\om_{41}x_1 + \om_{42}x_2 + \om_{43}y_2 + \om_{44}y_2.
\end{eqnarray*}
Comparing  $d(\gamma \delta^2 \gamma)$ and $d(\delta)$, we obtain $(5a)$ and $(5b)$.
\end{proof}

By Lemma 3.1, 3.2, 3.3, 3.4 and 3.5, we obtain the following equations. 
\\
\\
\hspace{5mm}
$
\begin{cases}
\om_{11} + \om_{12} = \om_{31} + \om_{32}, &(1a) \\
\om_{13} + \om_{14} = \om_{33} + \om_{34}, &(1b)
\end{cases}
$
\hspace{5mm}
$
\begin{cases}
2\om_{11} + \one \one  \om_{12} = 2\om_{41}, & (2a) \\
\one -\om_{11} + \om_{12} = 2\om_{42},  &(2b) \\
\one\one \om_{13} +\one \one  \om_{14} = 2\om_{43}, &(2c) \\
-\om_{13} +2\om_{14} = 2\om_{44}, & (2d)
\end{cases}
$
\\
\\
\\
\hspace{5mm}
$
\begin{cases}
2\om_{21} = 2\om_{23} = 0, &(3a) \\
2\om_{12} = 2\om_{22}, &(3b) \\
2\om_{14} = 2\om_{24}, & (3c)
\end{cases}
$
$
\hspace{5mm}\hspace{5mm}\hspace{5mm}
\begin{cases}
\om_{31} - \om_{32} - \om_{22} = \om_{11} + \om_{21}, &  (4a) \\
\om_{31} + \om_{32} - \om_{21} = \om_{12} - \om_{22}, & (4b) \\
\om_{33} - \om_{34} - \om_{24} = \om_{13} + \om_{23},  &(4b) \\
\om_{34} + \om_{33} - \om_{23} = \om_{14} - \om_{24}, & (4b)
\end{cases}
$
\\
\\
\\
\hspace{5mm}
$
\begin{cases}
2\om_{31}+\om_{32} = 2\om_{41} + \om_{42}, & (5a) \\
\hspace{1.2cm} \om_{33} = \one \one \om_{43}. & (5b)
\end{cases}
$
\\
\\
\\
\section{Calculation of cohomology}
In this section we prove that $H^1(\mathcal{E}_2;H_A)\cong \Homom( (\mathbb{Z}/2\mathbb{Z})^2,A)$. The universal coefficient theorem implies $H_1(\cal{E}_2; H_1(\Sigma_2) )\cong  (\mathbb{Z}/2\mathbb{Z})^2$.
To determine the twisted first cohomology group of $\cal{E}_2$, we solve equations $(1a)-(5b)$ and the condition $d \in \Ker f$, i.e.
\begin{align}
\om_{41} - \om_{11} = \om_{32} - \om_{22} =\om_{33} - \om_{13} = \om_{24} - \om_{44} = 0.&\tag{$*$}
\end{align}
\begin{lemm}We have a relation of $\Ker f$ :
\begin{align*}
\om_{12}= 0.
\end{align*}
\end{lemm}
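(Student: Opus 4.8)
The plan is to obtain $\omega_{12}=0$ by a single substitution, with no computation beyond that already done in Sections~2 and~3. The point is that any $d \in \Ker f$ carries two independent pieces of information: being a crossed homomorphism, it satisfies \emph{all} the relations $(1a)$--$(5b)$ established in Lemmas~3.1--3.5; and lying in $\Ker f$, it additionally satisfies the four equalities $(*)$, in particular $\omega_{41}=\omega_{11}$. The proof of Lemma~4.1 needs only one relation from the first list and one equality from the second.

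Concretely, I would start from relation $(2a)$, which reads $2\omega_{11}+\omega_{12}=2\omega_{41}$, and substitute $\omega_{41}=\omega_{11}$ from $(*)$. This gives $2\omega_{11}+\omega_{12}=2\omega_{11}$, and cancelling $2\omega_{11}$ on both sides yields $\omega_{12}=0$. The argument uses only additive cancellation, so it is valid uniformly for $A=\mathbb{Z}$ and for $A=\mathbb{Z}/n\mathbb{Z}$; no invertibility of $2$ is required.

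There is essentially no obstacle in this particular statement: it is an immediate consequence of facts already in place, and I would present it as the first and easiest in the sequence of reductions that will successively eliminate the $\omega_{ij}$. The genuine work sits elsewhere, namely in deriving the relation system $(1a)$--$(5b)$ from Akbas's presentation (Section~3) and in the later lemmas that will feed $\omega_{12}=0$, together with the remaining relations and the conditions $(*)$, back into the system to cut $\Ker f$ down to the group $\Homom\bigl((\mathbb{Z}/2\mathbb{Z})^2,A\bigr)$ targeted in Section~4.
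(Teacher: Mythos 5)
Your proof is correct and is exactly the paper's argument: substitute $\omega_{41}=\omega_{11}$ from $(*)$ into relation $(2a)$ and cancel $2\omega_{11}$ to get $\omega_{12}=0$. Nothing further is needed.
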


\begin{proof}
Using $(2a)$ and $\om_{11}=\om_{41}$ by $(*)$, we obtain
\begin{align*}
\om_{12} = 0.
\end{align*}
\end{proof}

\begin{lemm} The elements $\om_{21}$, $\om_{22}$, $\om_{31}$, $\om_{32}$ and $\om_{42}$ have order $2$ and 
\begin{align*} 
\om_{22} =\om_{31}=\om_{32} = -\om_{21}=-\om_{42} .
\end{align*}
\end{lemm}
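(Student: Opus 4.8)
The plan is to treat $(1a)$--$(5b)$, together with the kernel conditions $(*)$ and the relation $\om_{12}=0$ from Lemma 4.1, as a system of linear identities in $A$ and simply solve it. The organising insight is that every one of the five quantities in question turns out to be $2$-torsion, so I would prove $2\om_{22}=0$ first and only afterwards identify the elements; trying to match signs before this fact is available is exactly where things go wrong. From $(*)$ I read off at once $\om_{32}=\om_{22}$ and $\om_{41}=\om_{11}$, which I substitute throughout. Then relation $(3b)$, which reads $2\om_{12}=2\om_{22}$, combined with $\om_{12}=0$, gives $2\om_{22}=0$. From here $-\om_{22}=\om_{22}$, and the same will hold for every element I show to equal $\pm\om_{22}$; this is what upgrades the separate equalities into the single chain asserted in the statement.

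Next I would pin down $\om_{11}$. Substituting $\om_{12}=0$, $\om_{32}=\om_{22}$ and $2\om_{22}=0$ into $(4b)$ collapses it to $\om_{31}=\om_{21}$, while the same substitutions turn $(4a)$ into $\om_{31}=\om_{11}+\om_{21}$. Comparing the two forces $\om_{11}=0$, hence also $\om_{41}=\om_{11}=0$ by $(*)$. With $\om_{11}=0$ in hand, relation $(1a)$ becomes $0=\om_{31}+\om_{22}$, so $\om_{31}=-\om_{22}=\om_{22}$, and then $\om_{21}=\om_{31}$ yields $\om_{21}=-\om_{22}$. Together with $\om_{32}=\om_{22}$ this already establishes $\om_{22}=\om_{31}=\om_{32}=-\om_{21}$.

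It then remains to bring in $\om_{42}$. Relation $(2b)$ reduces to $0=2\om_{42}$ once $\om_{11}=\om_{12}=0$, which gives the torsion claim for $\om_{42}$; and relation $(5a)$, after inserting $2\om_{31}=2\om_{41}=0$ and $\om_{32}=\om_{22}$, reduces to $\om_{22}=\om_{42}$, that is $-\om_{42}=\om_{22}$. This closes the chain $\om_{22}=\om_{31}=\om_{32}=-\om_{21}=-\om_{42}$. Finally, $\om_{21}$ is $2$-torsion directly by $(3a)$, and each of $\om_{22},\om_{31},\om_{32},\om_{42}$ equals $\pm\om_{22}$ with $2\om_{22}=0$, so all five elements satisfy $2\om=0$, as required.

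I do not expect a genuine conceptual obstacle here: the content is linear bookkeeping over $A$. The one point demanding care is the order of operations, namely deriving $2\om_{22}=0$ from $(3b)$ \emph{before} attempting to identify $\om_{31},\om_{21},\om_{42}$ with $\om_{22}$; without this $2$-torsion fact the signs produced by $(1a)$ and $(5a)$ (which give $\om_{31}=-\om_{22}$ and $\om_{42}=\om_{22}$) cannot be reconciled with the symmetric chain in the statement. The secondary risk is purely clerical, in keeping the four index positions and the various signs straight while substituting $\om_{12}=0$, $\om_{32}=\om_{22}$ and $\om_{41}=\om_{11}$ into the relations.
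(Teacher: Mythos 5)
Your proof is correct and is essentially the same kind of argument as the paper's: straightforward linear elimination using relations $(1a)$--$(5b)$, the kernel conditions $(*)$, and $\om_{12}=0$ from Lemma 4.1. The only difference is the route through the system — you extract $2\om_{22}=0$ from $(3b)$ and then $\om_{11}=0$ from $(4a)$ and $(4b)$ at the outset, whereas the paper first combines $(2a)$, $(2b)$, $(5a)$ and $(1a)$ to get $\om_{31}+\om_{42}=0$ and only deduces $\om_{11}=\om_{41}=0$ after the lemma — but this is a reordering of the same bookkeeping, not a different method.
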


\begin{proof}
By (2a) and (2b), we have $\om_{11} + 2\om_{12} = 2\om_{41} + 2\om_{42}$. By $(5a)$ we have  $2\om_{41} + 2\om_{42} = 2\om_{31} + \om_{32}+\om_{42}$. Using these two equations and $(1a)$, we have
$
\om_{12}=\om_{31}+\om_{42}. 
$ 
Since $\om_{12}=0$, the equation 
\begin{align*} 
\om_{31}+\om_{42}=0 \one\hspace{5mm}\hspace{5mm} (4.2.1)
\end{align*} 
holds. Using $(2b)$ and Lemma 4.1, we obtain
\begin{align*}
\om_{11}+2\om_{42}=0. \hspace{9mm} (4.2.2)
\end{align*}
The equation
\begin{eqnarray*}
\om_{31}+\om_{32} &\overset{(1a)}{=}&
\om_{11}+ \om_{12}\\ &\overset{(Lem 4.1)}{=}&
\om_{11} \\ &\overset{(4.2.2)}{=}& 
-2\om_{42}\\ &\overset{(4.2.1)}{=}&
2\om_{31}
\end{eqnarray*}
holds. So we obtain
\begin{eqnarray*}
\om_{31}- \om_{32}=0.\hspace{15mm}(4.2.3)
\end{eqnarray*}
We have
\begin{eqnarray*} 
\om_{21} &\overset{(4a)}{=}& \om_{31}-\om_{32}-\om_{11}-\om_{22}\\
&\overset{(4.2.3)}{=}& -\om_{11} -\om_{22}\\
&\overset{(**)}{=}& -2\om_{22} -\om_{22}\\
&=& -3\om_{22}\\
&\overset{(3b)}{=}& -\om_{22} -2\om_{12}\\
&\overset{(Lem 4.1)}{=}& -\om_{22}.
\end{eqnarray*} 
Here, $(**)$ is the equation $\om_{11}=2\om_{22}$. This equation is obtained as follows:
\begin{eqnarray*}
\om_{11}\overset{(4.2.2)}{=}-\om_{42}\overset{(4.2.1)}{=}2\om_{31}\overset{(4.2.3)}{=}2\om_{32}\overset{(*)}{=}2\om_{22}.
\end{eqnarray*}
By $(3b)$ and Lemma4.1, we have $-2\om_{22}=0$. Hence we obtain
\begin{eqnarray*}
2\om_{21}=-2\om_{22}=0.
\end{eqnarray*}
\end{proof}
From the results of Lemma 4.2 and (4.2.2), we have 
\begin{align*}
\om_{11}\overset{(*)}{=}\om_{41}=0.
\end{align*}

\begin{lemm} We have a relation:
\begin{align*}
\om_{13}=\om_{14}=\om_{33}=\om_{34}=\om_{43}=0.
\end{align*}
\end{lemm}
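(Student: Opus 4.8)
The plan is to first show that the five coefficients $\om_{13},\om_{14},\om_{33},\om_{34},\om_{43}$ all coincide with a single element $w\in A$, and then to combine the relations carrying a factor of $2$ in order to force $w=0$. Throughout I must be careful never to divide by $2$, since $A$ may be $\mathbb{Z}/n\mathbb{Z}$; this is the one genuinely delicate point.

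For the collapse step I would chain together the relations that directly equate $y$-coefficients. The condition $(*)$ gives $\om_{33}=\om_{13}$, and $(5b)$ gives $\om_{33}=\om_{43}$, so $\om_{13}=\om_{33}=\om_{43}$. Substituting $\om_{43}=\om_{13}$ into $(2c)$ turns $\om_{13}+\om_{14}=2\om_{43}$ into $\om_{14}=\om_{13}$, and feeding this together with $\om_{33}=\om_{13}$ into $(1b)$ turns $\om_{13}+\om_{14}=\om_{33}+\om_{34}$ into $\om_{34}=\om_{13}$. At this stage all five coefficients equal $w:=\om_{13}$.

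To finish I would read off two expressions for $2\om_{24}$ and compare them. From $(2d)$ we get $-w+2w=2\om_{44}$, that is $w=2\om_{44}$, and since $(*)$ gives $\om_{44}=\om_{24}$ this reads $w=2\om_{24}$. From $(3c)$ we get $2\om_{14}=2\om_{24}$, that is $2w=2\om_{24}$. Equating the two right-hand sides gives $2w=w$, hence $w=0$, which is exactly the claimed vanishing. I expect the main obstacle to be purely this bookkeeping arrangement: the torsion-free identity $2w=w$ has to be extracted without ever solving for $\om_{24}$ itself, precisely because no division by $2$ is available over a general ring $A$.
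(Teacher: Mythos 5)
Your proposal is correct and follows essentially the same route as the paper: both first collapse all five coefficients to a single element via $(*)$, $(5b)$, $(2c)$ and $(1b)$, and then combine $(2d)$, $(3c)$ and $(*)$ to obtain the identity $2w=w$ (the paper phrases it as $4\om_{44}=2\om_{44}$, hence $2\om_{44}=\om_{14}=0$). Your explicit invocation of $(2c)$ in the collapse step is in fact slightly more careful than the paper's wording, which attributes that equality only to $(5b)$ and $(*)$.
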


\begin{proof}
By $(5b)$ and $(*)$, we obtain 
\begin{align*}
\om_{13}=\om_{33}=\om_{43}=\om_{14}.
\end{align*}
Using this equation and $(1b)$, we have $\om_{14}=\om_{34}$. 
So the equation
\begin{eqnarray*}
\hspace{30mm} \om_{13}=\om_{33}=\om_{43}=\om_{14}=\om_{34}  \hspace{10mm}(4.3.1)
\end{eqnarray*}
holds. 
The equation 
\begin{eqnarray*}
2\om_{44} &\overset{(2d)}{=}& 2\om_{14}-\om_{13}\\
&\overset{(4.3.1)}{=}& \om_{14}
\end{eqnarray*}
holds. Hence we have 
 \begin{align*}
4\om_{44}=2\om_{14} \overset{(3c)}{=} 2\om_{24} \overset{(*)}{=}2\om_{44}.
\end{align*}
So we obtain 
\begin{align*}
2\om_{44}=0.
\end{align*}
Since the equation $2\om_{44}=\om_{14}$ holds, we complete the proof of Lemma 4.3.
\end{proof}
\begin{lemm} The elements $\om_{23}$, $\om_{24}$ and $\om_{44}$ have order $2$ and are equal to each other:
\begin{align*}
\om_{23}=\om_{24}=\om_{44},\quad 2\om_{23} =0.
\end{align*}
\end{lemm}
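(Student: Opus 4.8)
The plan is to obtain all the assertions by pure substitution into the two relations (4c) and (4d), using the condition $(*)$ together with the vanishing results already established in Lemma 4.3; no genuinely new computation beyond this is needed. First I would invoke $(*)$, which gives $\om_{24}=\om_{44}$ outright. This already settles one of the two required equalities, and it lets me transport the order-two fact $2\om_{44}=0$ (established at the very end of Lemma 4.3) to $\om_{24}$, and ultimately to $\om_{23}$, once the remaining equality is in hand.

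Next I would record the inputs I need from Lemma 4.3, namely $\om_{13}=\om_{14}=\om_{33}=\om_{34}=0$; these are exactly the coefficients that occur in (4c) and (4d). The core step is then to substitute them into (4d), $\om_{34}+\om_{33}-\om_{23}=\om_{14}-\om_{24}$, which collapses to $-\om_{23}=-\om_{24}$ and hence gives $\om_{23}=\om_{24}$. Chained with the first step this produces $\om_{23}=\om_{24}=\om_{44}$, the full equality claimed in the lemma. For the order statement I would simply cite (3a), which asserts $2\om_{23}=0$; the equalities just proved then force $2\om_{24}=2\om_{44}=0$ as well, in agreement with $2\om_{44}=0$ from Lemma 4.3.

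I do not anticipate a serious obstacle, since every relation becomes linear and transparent after the substitutions. The one point that deserves a word of care is a consistency check: under the same substitutions, relation (4c) reads $-\om_{24}=\om_{23}$, i.e. $\om_{23}=-\om_{24}$, whereas (4d) gives $\om_{23}=+\om_{24}$. These are compatible precisely because $2\om_{24}=0$, so (4c) yields neither a contradiction nor any constraint beyond the order-two information already in hand. Verifying this compatibility is the only subtlety, and it is immediate once the order-two facts are recorded, so the argument closes cleanly.
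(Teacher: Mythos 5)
Your proposal is correct and takes essentially the same route as the paper: both derive $\om_{23}=\om_{24}$ by substituting the vanishing results of Lemma 4.3 into relation (4d) (which the paper's summary list mislabels as (4b)), obtain $\om_{24}=\om_{44}$ from $(*)$, and get the order-two statement from (3a). Your extra consistency check against (4c) is a harmless addition not present in the paper.
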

\begin{proof}
By (3a), the element $\om_{23}$ have order $2$. 
Since the equation
\begin{eqnarray*}
\om_{24} &\overset{(4b)}{=}& \om_{14} -\om_{33}-\om_{34} + \om_{23} \\
&\overset{(Lem 4.3)}{=}& \om_{23} 
\end{eqnarray*}
holds, we obtain
\begin{align*}
\om_{23}=\om_{24}\overset{(*)}{=}\om_{44}.
\end{align*}
\end{proof}

\subsection*{Proof of Theorem 1.1.}
From Lemmas 3,7, 4.1, 4.2, 4.3 and 4.4, we have 
\begin{eqnarray*}
d(\alpha)&=&0,\\
d(\beta)&=&\om_{22}(-x_1+x_2) +\om_{23}(y_1+y_2),\\
d(\gamma)&=&\om_{22}(x_1+x_2),\\
d(\delta)&=&\om_{42}x_2 + \om_{23}y_2,
\end{eqnarray*}
where $-\om_{22}=\om_{42}$ and $2\om_{22}=2\om_{23}=0$. Hence it follows
\begin{align*}
H^1(\cal{E}_2;H_{A})\cong \Ker f \cong\{(\om_{22}, \om_{23})\in A^2 ; 2\om_{22}=2\om_{23}=0\}.
\end{align*}
So we obtain
\begin{align*}
H_1(\cal{E}_2;H_1(\Sigma_2))\cong  (\mathbb{Z}/2\mathbb{Z})^2.
\end{align*}
This isomorphism follows from the short exact sequence
\begin{align*}
0 \to \Ext(H_0(\cal{E}_2;H_1(\Sigma_2)),A) \to H^1(\cal{E}_2;H_A) \to \Homom(H_1(\cal{E}_2;H_1(\Sigma_2)),A) \to 0
\end{align*}
since we have $H_0(\cal{E}_2;H_1(\Sigma_2))=H_1(\Sigma_2)_{\cal{E}_{2}}=0$ by Lemma 2.1.
we complete the proof of Theorem 1.1.


\vspace{0.5cm}

\address(Akira Kanada){
Department of Mathematics, Tokyo Institute of Technology, Oh-okayama, Meguro, Tokyo 152-8551, Japan}

\email{
E-mail address: kanada.a.aa@m.titech.ac.jp
}
\end{document}